\theoremstyle{plain}
\newtheorem{theorem}{Theorem}[section]
\newtheorem{proposition}[theorem]{Proposition}
\theoremstyle{definition}
\newtheorem{definition}[theorem]{Definition}
\newcommand {\Set}[1] {\mathbb{#1}}
\newcommand{\setR}[0]{\Set{R}}
\newcommand{\slaz}[0]{{\setminus\{0\}}}
\newcommand{\pd}[2]{\frac{\partial #1}{\partial #2}}
\newcommand{\vfield}[1]{{\mathfrak X}( #1)}
\title{Electromagnetic fields from contact forms}
\author[Dahl]{Matias F. Dahl}
\address{
Matias F. Dahl, Institute of Mathematics, P.O.Box
1100, 02015 Helsinki University of Technology, Finland }
\urladdr{http://www.math.tkk.fi/\textasciitilde{}fdahl/}
\subjclass[2000]{
53Z05, 
78A25} 
\keywords{Maxwell's equations, electromagnetics, contact geometry}
\date{\today}
\begin{document}
\begin{abstract}
  In this short note we prove that every contact form on a
  $3$-manifold $M$ induces a solution to Maxwell's equations on $M$.
\end{abstract}

\maketitle

\section{Introduction}
The main result of this note is the following theorem. It shows that
every contact form on a $3$-manifold induces a time dependent solution
to the source-less Maxwell's equations on the same manifold.

\begin{theorem} 
\label{mainThm}
Suppose $\alpha\in \Omega^1(M)$ is a contact form on a $3$-manifold
$M$ and $\omega\in \setR\slaz$.  Then there exists a $1$-form
$\beta\in \Omega^1(M)$ such that
\begin{eqnarray*}
  E(x,t) &=& \operatorname{Re} \{ \alpha e^{i \omega t}\},   \\
  H(x,t) &=& \operatorname{Im} \{ \beta e^{i \omega t}\},   \quad\quad (x,t)\in M\times \setR 
\end{eqnarray*}
is a solution to the source-less Maxwell's equations in an
electromagnetic media determined by $\alpha$.
\end{theorem}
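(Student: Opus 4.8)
The plan is to reduce Theorem~\ref{mainThm} to a single geometric fact about contact forms — that the Reeb field is a Beltrami field for a suitable metric — after which $\beta$ and the medium can be read off almost for free.

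\emph{Reduction.} I would first fix the differential-form formulation of source-less Maxwell's equations on $M$; note that $\alpha$ orients $M$ by the volume form $\alpha\wedge d\alpha$. The fields $E,H$ are time-dependent $1$-forms, the inductions $D,B$ are time-dependent $2$-forms, an isotropic medium is a pair of smooth bundle isomorphisms $\varepsilon,\mu\colon\Omega^1(M)\to\Omega^2(M)$ with $D=\varepsilon E$ and $B=\mu H$, and the equations read
\[
  dE=-\partial_t B,\qquad dH=\partial_t D,\qquad dD=0,\qquad dB=0 .
\]
Since $\alpha$ and $\beta$ are real, $E=\alpha\cos\omega t$ and $H=\beta\sin\omega t$; because $\varepsilon,\mu$ are independent of $t$, the first two equations become $(d\alpha+\omega\,\mu\beta)\cos\omega t=0$ and $(d\beta+\omega\,\varepsilon\alpha)\sin\omega t=0$, hence are equivalent to the two identities
\[
  \mu\beta=-\tfrac1\omega\,d\alpha ,\qquad \varepsilon\alpha=-\tfrac1\omega\,d\beta .
\]
Granting these, $D=-\tfrac1\omega(\cos\omega t)\,d\beta$ and $B=-\tfrac1\omega(\sin\omega t)\,d\alpha$ are exact, so $dD=dB=0$ automatically. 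It therefore suffices to produce $\beta,\varepsilon,\mu$ satisfying the two displayed identities.

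\emph{The medium.} This is where contact geometry enters. As $\alpha$ is contact, $d\alpha$ restricts to a symplectic form on the contact distribution $\xi=\ker\alpha$; I would choose a $d\alpha$-compatible almost complex structure $J$ on $\xi$ and let $g$ be the associated Riemannian metric on $M$, with the Reeb field a unit normal to $\xi$. In an orthonormal coframe $\{\alpha,e^1,e^2\}$ adapted to this splitting and to the orientation one computes $d\alpha=e^1\wedge e^2$, so the Hodge star $\star=\star_g$ satisfies $\star\,d\alpha=\alpha$, i.e. $d\alpha=\star\alpha$. (A different normalization of $g$ only replaces this by $\star\,d\alpha=\kappa\alpha$ for a nonzero constant $\kappa$, which I normalize away.) I would then set
\[
  \beta=\alpha,\qquad \varepsilon=\mu=-\tfrac1\omega\,\star\colon\Omega^1(M)\to\Omega^2(M),
\]
a smooth bundle isomorphism, hence a bona fide isotropic medium built from $\alpha$. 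Then $\mu\beta=-\tfrac1\omega\,\star\alpha=-\tfrac1\omega\,d\alpha$ and $\varepsilon\alpha=-\tfrac1\omega\,\star\alpha=-\tfrac1\omega\,d\alpha=-\tfrac1\omega\,d\beta$, so both identities hold and the proof closes.

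I expect the only genuine obstacle to be the Beltrami identity $\star_g\,d\alpha=\alpha$: one must make precise the construction of a metric associated to $\alpha$ (existence of a $d\alpha$-compatible $J$ on the symplectic bundle $\xi$ is standard) and verify in an adapted coframe that $\alpha$ becomes an eigenform of $\star\,d$. Everything else is bookkeeping with $\sin$, $\cos$ and $d^2=0$. One point deserving a remark is the sign of the scalar $-1/\omega$ — and hence whether one wants the medium positive definite — which can be handled by rescaling $g$, by exploiting the free constant hidden in $\beta=\alpha$ (take instead $\beta=c\alpha$, $\mu=-\tfrac1{c\omega}\star$, $\varepsilon=-\tfrac c\omega\star$), or by regarding $D$ and $B$ as twisted $2$-forms.
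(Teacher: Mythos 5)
Your proposal is correct and follows essentially the same route as the paper: both reduce Maxwell's equations to the Beltrami identity $d\alpha=\kappa\ast\alpha$ for a metric adapted to $\alpha$ (the Chern--Hamilton fact, which you reconstruct via a $d\alpha$-compatible complex structure on $\ker\alpha$), take $\beta$ proportional to $\alpha$, and let the constitutive maps be the Hodge star. The only difference is where the scalar normalization lives: the paper rescales the metric so that $d\alpha=|\omega|\ast\alpha$ and sets $\varepsilon=\mu=\ast$ exactly, which is precisely the positivity/sign issue you defer to your closing remark --- your choice $c=-\operatorname{sgn}(\omega)$ reproduces the paper's $\beta$ and medium verbatim, and is needed since $-\tfrac1\omega\star$ is not the Hodge star of a Riemann metric for $\omega>0$.
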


The converse question was studied in \cite{Dahl2004}: If we start with
an electromagnetic field, can we extract contact structures from it?
This question seem to be much more difficult.
Particular examples of such contact structures can be found in 
\cite{Dahl2004}. As an example, the overtwisted contact structures on 
$\setR^3$ are induced by plane-wave solutions to Maxwell's equations. 


The proof of Theorem \ref{mainThm} is a relatively direct consequence
of an observation of Chern and Hamilton in \cite{ChernHamilton}.
Namely: every contact form on a $3$-manifold has an adapted
Riemann metric (Proposition \ref{ChHa} below).
To prove Theorem \ref{mainThm} we use Proposition \ref{ChHa} to obtain
a Riemann metric $g$ adapted to the contact form $\alpha$. The
advantage of this metric is that it maps the contact $1$-form
$\alpha\in \Omega^1(M)$ into a Beltrami vector field $\alpha^\sharp\in
\vfield{M}$. We can then define electromagnetic media on $M$ by
setting $g_\varepsilon = g_\mu=g$, and after this the proof is
essentially a direct calculation. This approach was also used in
\cite{EtGh2000} by Etnyre and Ghrist to study connections between
contact geometry and hydrodynamics.
Let us point out that the relation between Beltrami vector fields and
electromagnetics is well known.  See for example \cite{Dahl2004,
  Lak, LiSiTrVi:1994}. For hydrodynamics, see \cite{EtGh2000}.

\section{Terminology and proof of Theorem \ref{mainThm}}
\label{mainSec}
We assume that $M$ is a smooth $3$-manifold. That is, $M$ is
a Hausdorff, second countable, topological space that is locally
homeomorphic to $\setR^3$ with smooth transition maps. All objects are
smooth where defined; $k$-forms are denoted by
$\Omega^k(M)$, vector fields are denoted by $\vfield{M}$, and
functions are denoted by $C^\infty(M)$.  By $\Omega^k(M)\times \setR$ we
denote the set of $k$-forms that depend on a parameter $t\in \setR$.

\subsection{Maxwell's equations} We will use differential forms to
write Maxwell's equations on manifold $M$. See \cite{BH1996,
Bossavit:2001}.  For field quantities $E,H\in \Omega^1 (M)\times
\setR$ and $D,B\in \Omega^2 (M)\times \setR$ the \emph{sourceless Maxwell equations} read
\begin{eqnarray}
\label{max1}
  dE &=& - \pd{B}{t}, \\  
\label{max2}
  dH &=& \pd{D}{t}, \\
\label{max3}
  dD &=& 0, \\
\label{max4}
  dB &=& 0, 
\end{eqnarray}
and the \emph{constitutive equations} read
\begin{eqnarray}
\label{con1}
  D  &=&\ast_\varepsilon E, \\
\label{con2}
  B &=& \ast_\mu H, 
\end{eqnarray}
where $\ast_\varepsilon$ and $\ast_\mu$ are Hodge star operators
corresponding to two Riemann metrics $g_\varepsilon$ and $g_\mu$,
respectively. On a $3$-manifold the \emph{Hodge star operator} $\ast$
is the map
$\ast\colon \Omega^p(M)\to \Omega^{3-p}(M)$ ($p=0,\ldots, 3$)
that acts on basis elements of $\Omega^p(M)$ as
\begin{eqnarray*}
\label{hodgedef}
\ast(dx^{i_1} \wedge \cdots \wedge dx^{i_p})\!\!\!\! &=& \!\!\!\!\frac{\sqrt{|g|}}{(3-p)!} g^{i_1 l_1}\cdots  g^{i_p l_p} \varepsilon_{l_1 \cdots l_p\, l_{p+1} \cdots l_n} dx^{l_{p+1}}\wedge \cdots \wedge  dx^{l_{n}}.
\end{eqnarray*}
Here $g=g_{ij}dx^i\otimes dx^j$, $|g|=\det g_{ij}$,
$g^{ij}$ represent the $ij$th entry of $(g_{ij})^{-1}$, and
$\varepsilon_{l_1\cdots l_n}$ is the \emph{Levi-Civita permutation
symbol}. On a $3$-manifold we always have $\ast^2 = \operatorname{Id}$. 

For a Riemann metric $g$, let $\sharp$ and $\flat$ be the musical
isomorphisms $\sharp\colon T^\ast M\to TM$ and $\flat \colon TM\to
T^\ast M$.

\subsection{Contact geometry and Beltrami fields} 
A \emph{contact form} on a $3$-manifold is a $1$-form $\alpha\in
\Omega^1(M)$ such that $\alpha \wedge d\alpha$ is never zero
\cite{Geiges2008}. By Frobenius theorem \cite{Boothby}, a form
$\alpha\in \Omega^1(M)$ is a contact form if and only if the plane
field
$$
  \operatorname{ker \alpha} = \{ v\in TM : \alpha(v)=0\}
$$ 
is nowhere integrable. For a contact form $\alpha$, the pair
$(M,\operatorname{ker}\alpha)$ is called a \emph{contact
  structure}. By definition, every contact form induces an orientation
on $M$. 

\begin{definition}[Adapted Riemann metric]
\label{adapt}
A contact form $\alpha\in\Omega^1(M)$ and a Riemann metric $g$ are 
\emph{adapted} if
\begin{eqnarray}
\label{adaptMetric}
  d\alpha = 2\ast \alpha, \quad g(\alpha^\sharp,\alpha^\sharp)=1,
\end{eqnarray}
where $\ast$ is the Hodge star operator induced by $g$.
\end{definition}

Proposition \ref{ChHa} is due to Chern and Hamilton
\cite{ChernHamilton} (who also studied conditions on curvature for
$g$). 
Direct proofs can be found in \cite{Kom2006, EtGh2000}.

\begin{proposition}[Chern, Hamilton -- 1984] 
\label{ChHa}
Every contact form on a $3$-manifold has an
(non-unique) adapted Riemann metric.
\end{proposition}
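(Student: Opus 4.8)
The plan is to build $g$ fiberwise from two ingredients attached to $\alpha$: the Reeb vector field and the symplectic structure that $d\alpha$ carries on the contact plane field. First I would introduce the \emph{Reeb field} $R\in\vfield{M}$ of $\alpha$, characterized by $\alpha(R)=1$ and $\iota_R\,d\alpha=0$, together with the contact distribution $\xi=\ker\alpha$. A short computation shows that $d\alpha$ restricts to a nowhere-vanishing $2$-form on $\xi$: if $u,v$ span $\xi_p$, then $(\alpha\wedge d\alpha)(R_p,u,v)=d\alpha(u,v)$, so $d\alpha|_{\xi_p}$ is nondegenerate, and with the co-orientation of $\xi$ given by $\alpha$ it orients $\xi$ compatibly with the orientation that $\alpha$ induces on $M$.

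Next I would equip the rank-$2$ bundle $\xi$ with a metric adapted to $d\alpha|_\xi$. Pick any auxiliary fiber metric $h_0$ on $\xi$ (possible by paracompactness) and let $A\in\operatorname{End}(\xi)$ satisfy $d\alpha(u,v)=h_0(Au,v)$; then $A$ is $h_0$-skew and invertible, $-A^2$ is $h_0$-symmetric and positive, and $J:=(\sqrt{-A^2})^{-1}A$ is a smooth almost complex structure on $\xi$ for which $h_1(u,v):=d\alpha(u,Jv)$ is a smooth fiber metric. Since $h_1$ and $d\alpha|_\xi$ induce the same orientation on $\xi$, there is a smooth positive function $\lambda$ with $d\alpha|_\xi=\lambda\,\omega_{h_1}$, where $\omega_{h_1}$ denotes the area form of $h_1$; putting $h:=\tfrac{\lambda}{2}h_1$ gives a metric on $\xi$ with $d\alpha|_\xi=2\,\omega_h$. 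Finally I define $g$ on $M$ by declaring $g(R,R)=1$, $\xi\perp R$, and $g|_\xi=h$.

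It then remains to check the two conditions in Definition \ref{adapt}. Using the splitting $TM=\setR R\oplus\xi$ one gets $g(R,\cdot)=\alpha$, hence $\alpha^\sharp=R$ and $g(\alpha^\sharp,\alpha^\sharp)=g(R,R)=1$. For the identity $d\alpha=2\ast\alpha$, work locally in a positively oriented $g$-orthonormal coframe $(\alpha,e^1,e^2)$ — here $\alpha$ is a unit covector and $e^1,e^2$ may be taken to annihilate $R$ — so that $\ast\alpha=e^1\wedge e^2$; because $\iota_R\,d\alpha=0$ the $2$-form $d\alpha$ has no $\alpha$-component, i.e.\ $d\alpha=c\,e^1\wedge e^2$ for some function $c$, and evaluating both sides on the $\xi$-frame dual to $(e^1,e^2)$ while invoking $d\alpha|_\xi=2\,\omega_h$ yields $c=2$. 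Non-uniqueness is evident from the freedom in choosing $h_0$, equivalently $J$.

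I expect the only real difficulty to be bookkeeping rather than anything conceptual: one must (a) verify that the polar-decomposition construction of $J$, and hence of $g$, is smooth in the base point, and (b) track orientation conventions through the Hodge star carefully enough that the proportionality constant emerges as exactly $2$ (in particular with the correct sign), which is precisely what the choice of conformal factor $\lambda/2$ is designed to arrange.
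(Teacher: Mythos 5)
Your construction is correct, but note that the paper itself does not prove this proposition at all: it is quoted from Chern--Hamilton, with the remark that direct proofs appear in the cited references (Etnyre--Ghrist and Komendarczyk). What you have written is essentially that standard direct proof --- the ``associated metric'' construction from contact metric geometry: split $TM=\setR R\oplus\xi$ using the Reeb field, build a complex structure $J$ on $\xi$ compatible with the symplectic form $d\alpha|_\xi$ via polar decomposition of an auxiliary metric, take $h=d\alpha(\cdot,J\cdot)$ suitably normalized, and declare $R$ to be a unit normal to $\xi$. All the individual steps check out: $A$ is $h_0$-skew and invertible, so $-A^2$ is positive symmetric and its square root depends smoothly on the base point because it never degenerates; $J$ squares to $-\operatorname{Id}$ since $A$ commutes with $\sqrt{-A^2}$; $h_1$ is symmetric and positive definite; and the identification $\alpha^\sharp=R$ together with $\iota_R\,d\alpha=0$ reduces $d\alpha=2\ast\alpha$ to the fiberwise statement $d\alpha|_\xi=2\,\omega_h$, exactly as you argue. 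One small simplification you could make: for any $h_1$-unit vector $u$ the pair $(u,Ju)$ is $h_1$-orthonormal and $d\alpha(u,Ju)=h_1(u,u)=1$, so in fact $\lambda\equiv 1$ and the final rescaling is just $h=\tfrac12 h_1$; the function $\lambda$ is not needed. The two caveats you flag (smoothness of the polar decomposition, and the orientation convention fixing the sign of $c$) are genuine but routine: the first holds because $A$ is everywhere invertible, and the second is forced by the requirement that $\alpha\wedge d\alpha=c\,\alpha\wedge e^1\wedge e^2$ be positively oriented, which gives $c>0$ and hence $c=2$.
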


To understand the relevance of adapted Riemann metrics, let us define
the curl of a vector field $X\in \vfield{M}$ as the unique vector field
$\nabla\times X\in \vfield{M}$ determined by
$$
  (\nabla\times X)^\flat = \ast d( X^\flat).
$$
By setting $X=\alpha^\sharp$, conditions \eqref{adaptMetric} read
$$
  \nabla\times X = 2 X, \quad g(X,X)=1. 
$$
That is, an adapted metric turns the contact form into a non-vanishing
Beltrami vector field.  A \emph{Beltrami vector field} is a vector
field $F\in \vfield{M}$ such that $\nabla \times F = f F$ for some
function $f\in C^\infty(M)$ \cite{EtGh2000}.
In this note we only work with forms; we study contact forms and
electromagnetic fields, and both of these are most naturally
represented using forms, and not vector fields. It is therefore
motivated to work with Beltrami $1$-forms instead of Beltrami vector
fields. This motivates the next definition \cite{Dahl2004}.

\begin{definition}[Beltrami $1$-form]
A \emph{Beltrami $1$-form} for a Riemann metric with Hodge star operator
$\ast$ is a $1$-form $\alpha \in \Omega^1(M)$ such that
\begin{eqnarray}
\label{BeltramiEq}
  d\alpha = f \ast \alpha
\end{eqnarray}
for some $f\in C^\infty(M)$. Moreover, $\alpha$ is a \emph{rotational 
Beltrami $1$-form} if $f$ is nowhere zero.
\end{definition}

The next proposition will play a key role in the proof of Theorem
\ref{mainThm}. This proposition shows that non-vanishing rotational
Beltrami $1$-forms and contact $1$-forms are essentially in one-to-one
correspondence. 
Proposition \ref{belCoEq} is essentially a restatement (using $1$-forms)
of the result used by Etnyre and Ghrist in \cite{EtGh2000} to study
connections between contact geometry and hydrodynamics.

\begin{proposition}[Etnyre, Ghrist -- 2000]
\label{belCoEq}
Let $\alpha \in \Omega^1(M)$.
\begin{enumerate}
\item If $\alpha$ is a rotational Beltrami $1$-form that in nowhere
  zero, then $\alpha$ is a contact form.
\item If $\alpha$ is a contact form, and $f$ is a strictly
  positive function $f\in C^\infty(M)$, then there exists a
  (non-unique) Riemann metric on $M$ such that equation
  \eqref{BeltramiEq} holds.  In this case, $\alpha$ is a rotational
  Beltrami $1$-form.
\end{enumerate}
\end{proposition}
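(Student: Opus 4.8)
The plan is to treat the two parts separately, since they go in opposite directions. For part (i), suppose $\alpha$ is a nowhere-zero rotational Beltrami $1$-form, so $d\alpha = f\ast\alpha$ with $f$ nowhere zero. I would compute $\alpha\wedge d\alpha = f\,\alpha\wedge\ast\alpha$. The point is that for any Riemann metric, $\alpha\wedge\ast\alpha = |\alpha|_g^2\,\mathrm{vol}_g$, where $|\alpha|_g^2 = g(\alpha^\sharp,\alpha^\sharp)$ is the pointwise squared norm and $\mathrm{vol}_g = \ast 1$ is the Riemannian volume form. Since $\alpha$ is nowhere zero, $|\alpha|_g^2 > 0$ everywhere, and since $f$ is nowhere zero, the product $f|\alpha|_g^2\,\mathrm{vol}_g$ is a nowhere-vanishing $3$-form. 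Hence $\alpha\wedge d\alpha \neq 0$ everywhere, which is exactly the contact condition. This part is essentially a one-line computation once the identity $\alpha\wedge\ast\alpha = |\alpha|_g^2\,\mathrm{vol}_g$ is invoked.

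For part (ii), the plan is to build the metric from the contact form. Given a contact form $\alpha$ and a strictly positive $f\in C^\infty(M)$, set $\tilde\alpha = \frac{2}{f}\alpha$. Then $\tilde\alpha$ is again a contact form (rescaling by a nowhere-zero function preserves the contact condition) and $d\tilde\alpha = f\ast\tilde\alpha$ is equivalent to $d\alpha = f\ast\alpha$ after unwinding the scaling — more precisely, I would check that constructing a metric adapted (in the sense of Definition \ref{adapt}) to a suitable rescaling of $\alpha$ yields the Beltrami equation for $\alpha$ with the prescribed $f$. Concretely: apply Proposition \ref{ChHa} to the contact form $\tilde\alpha = \frac{2}{f}\alpha$ to obtain a Riemann metric $g$ with $d\tilde\alpha = 2\ast\tilde\alpha$ and $g(\tilde\alpha^\sharp,\tilde\alpha^\sharp)=1$. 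Substituting $\tilde\alpha = \frac{2}{f}\alpha$ into $d\tilde\alpha = 2\ast\tilde\alpha$ gives $d\!\left(\frac{2}{f}\alpha\right) = 2\ast\!\left(\frac{2}{f}\alpha\right)$, i.e. $\frac{2}{f}d\alpha + 2\,d(1/f)\wedge\alpha = \frac{4}{f}\ast\alpha$, so $d\alpha = f\ast\alpha - f\,d(1/f)\wedge\alpha$. This is not yet the clean Beltrami equation, so the scaling must be chosen more carefully — the correct move is to have Chern–Hamilton produce a metric adapted to $\alpha$ itself and then conformally rescale the metric to adjust the constant $2$ to the function $f$, using how $\ast$ transforms under conformal (or anisotropic) changes of metric on a $3$-manifold.

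The main obstacle is precisely this last point: going from the fixed constant $2$ in Definition \ref{adapt} to an arbitrary strictly positive function $f$. The cleanest route is to note that on a $3$-manifold, if $g$ is a metric with Hodge star $\ast$ and we replace $g$ by $\bar g = \lambda^2 g$ for a positive function $\lambda$, then on $1$-forms $\bar\ast = \lambda\ast$ (the conformal weight of $\ast$ on $p$-forms in dimension $n$ is $\lambda^{n-2p}$, which for $n=3$, $p=1$ is $\lambda$). So if $d\alpha = 2\ast\alpha$ for the Chern–Hamilton metric $g$, choosing $\lambda = 2/f$ gives $\bar\ast\alpha = \lambda\ast\alpha = \frac{2}{f}\ast\alpha$, hence $d\alpha = 2\ast\alpha = f\,\bar\ast\alpha$, which is the desired Beltrami equation for the rescaled metric $\bar g$. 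One then only needs to confirm $\lambda = 2/f$ is smooth and positive, which is immediate since $f$ is strictly positive. The final sentence of (ii) — that $\alpha$ is then rotational Beltrami — is automatic since $f$ was assumed strictly positive, hence nowhere zero.
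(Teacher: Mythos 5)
Your proposal is correct and, once you discard the dead-end attempt to rescale the form $\alpha$ itself, it coincides with the paper's proof: part (i) is the same computation via $\alpha\wedge\ast\alpha = g(\alpha^\sharp,\alpha^\sharp)\,dV$, and part (ii) is the same construction — take a Chern--Hamilton adapted metric and conformally rescale it by $4/f^2$ (your $\bar g = \lambda^2 g$ with $\lambda = 2/f$ is exactly the paper's $\widetilde g = (4/f^2)g$), using that $\ast$ on $1$-forms in dimension $3$ scales by $\mu^{1/2}$.
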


\begin{proof}
For \emph{(i)} we have
$
  \alpha \wedge d\alpha = f \alpha \wedge \ast \alpha. 
$ 
By Proposition 6.2.12 in \cite{MTAA}, 
\begin{eqnarray*}
  \alpha\wedge \ast \alpha &=& g(\alpha^\sharp, \alpha^\sharp) dV,
\end{eqnarray*} 
where $dV$ is the Riemann volume form. Hence $\alpha \wedge d\alpha$
vanishes only if $\alpha$ or $f$ vanishes.  For \emph{(ii)}, let us
first note that if $g$ and $\widetilde g$ are metrics such that
$\widetilde g = \mu\, g$ for some strictly positive function $\mu\in
C^\infty(M)$, then corresponding Hodge star operators $\ast,
\widetilde \ast \colon \Omega^1(M)\to \Omega^2(M)$ satisfy $\widetilde
\ast = (\mu)^{1/2} \,\ast$.
For the proof, let $g$ be a Riemann metric such that
\eqref{adaptMetric} holds. A suitable Riemann metric is then
$\widetilde g = 4/f^2 g$. In fact, for induced Hodge operators $\ast$
and $\widetilde \ast$, we obtain
$$
  f\, \widetilde \ast \alpha =  2 \ast \alpha = d\alpha
$$
and equation \eqref{BeltramiEq} follows.  
\end{proof}

\begin{proof}[Proof of Theorem \ref{mainThm}]
By Proposition \ref{belCoEq} \emph{(ii)}, there exists a Riemann
metric $g$ adapted to $\alpha$ such that
\begin{eqnarray}
\label{eqa}
  d\alpha &=& \vert \omega \vert \ast \alpha
\end{eqnarray}
Suitable electromagnetic media is given by $g_\varepsilon =
g_\mu=g$. Let $\beta\in \Omega^1(M)$ be the unique $1$-form determined
by
\begin{eqnarray}
\label{dap}
 d\alpha &=& -  \omega \ast \beta.
\end{eqnarray}
Equation \eqref{max1} follows.  Equation \eqref{eqa} implies that
\begin{eqnarray}
  \ast d  \ast d \alpha &=& \omega^2 \alpha.
\end{eqnarray}
Hence
\begin{eqnarray}
\label{dbp}
  d\beta &=&  - \omega \ast \alpha,
\end{eqnarray}
and equation \eqref{max2} follows.
Equation \eqref{dbp} implies that $d\ast \alpha=0$ and equation
\eqref{max3} follows.  Similarly, equation \eqref{max4} follows by
equation \eqref{dap}.
\end{proof}

\subsection*{Acknowledgements}
The author gratefully appreciates the finanical support provided by
the Academy of Finland Centre of Excellence programme 213476, the
Institute of Mathematics at the Helsinki University of Technology, and
Tekes project MASIT03 --- Inverse Problems and Reliability of Models.

\providecommand{\bysame}{\leavevmode\hbox to3em{\hrulefill}\thinspace}
\providecommand{\MR}{\relax\ifhmode\unskip\space\fi MR }
\providecommand{\MRhref}[2]{%
  \href{http://www.ams.org/mathscinet-getitem?mr=#1}{#2}
}
\providecommand{\href}[2]{#2}

\end{document}